\journalname{Signal Processing}
\newtheorem{thm}{Theorem}[section]
\newtheorem{lem}[thm]{Lemma}
\newtheorem{ass}[thm]{Assumption}
\newtheorem{rmk}[thm]{Remark}
\DeclareMathOperator{\E}{E}
\begin{document}

\begin{frontmatter}



\dochead{}

\title{Moment Conditions for Convergence of Particle Filters with Unbounded Importance Weights}


\author[lut]{Isambi S. Mbalawata\corref{cor1}}%
\ead{Isambi.Mbalawata@lut.fi}
\author[aalto]{Simo S\"arkk\"a}
\ead{Simo.Sarkka@aalto.fi}
\cortext[cor1]{Corresponding author}
\address[lut]{Department of
  Computational Engineering, Lappeenranta University of Technology,
  P.O.Box 20, FI-53851 Lappeenranta, Finland}
\address[aalto]{Department of Biomedical Engineering and Computational
  Science, Aalto University, P.O.Box 12200, FI-00076 Aalto, Finland}

\begin{abstract}
In this paper, we derive moment conditions for particle filter importance weights, which ensure that the particle filter estimates of the expectations of bounded Borel functions converge in mean square and $L^4$ sense, and that the empirical measure of the particle filter converges weakly to the true filtering measure. The result extends the previously derived conditions by not requiring the boundedness of the importance weights, but only boundedness of second or fourth order moments. We show that the boundedness of the second order moments of the weights implies the convergence of the estimates bounded functions in the mean square sense, and the $L^4$ convergence as well as the empirical measure convergence are assured by the boundedness of the fourth order moments of the weights. We also present an example class of models and importance distributions where the moment conditions hold, but the boundedness does not. The unboundedness in these models is caused by point-singularities in the weights which still leave the weight moments bounded. We show by using simulated data that the particle filter for this kind of model also performs well in practice.
\end{abstract}

\begin{keyword}
Particle filter \sep convergence \sep unbounded importance weights \sep moment condition.
\end{keyword}

\end{frontmatter}

\section{Introduction}
Particle filters are sequential Monte Carlo based methods for numerically solving Bayesian filtering problems by approximating the filtering distribution using a weighted set of Monte Carlo samples $\{ (\tilde{x}_t^{(i)}, \tilde{w}_t^{(i)})~:~i=1,\ldots,N \}$ (see, e.g., \cite{Doucet+Freitas+Gordon:2001,Sarkka:2013}). They approximate the filtering probability measure as a linear combination of delta measures located at the particles $\tilde{x}_t^{(i)}$ with the weights given by $\tilde{w}_t^{(i)}$.

In probabilistic sense, the Bayesian estimation problem can be expressed as state inference in a state space model of the form
\begin{equation}
\begin{split}
x_0 & \sim f_0(x_0),\\
  x_t &\sim f_t(x_t \mid x_{t-1}), \\
  y_t &\sim  g_t(y_t \mid x_t),
\end{split}
\label{eq:ssmodel}
\end{equation}
where $t=0,1,2,\ldots$, $x_t \in \mathbb{R}^n$ is the state of the system, $y_t \in
\mathbb{R}^m$ is the measurement,
$f_0(x_0)$ is the prior probability distribution of $x_0$ at initial time step $t=0$, $f_t(x_t \mid x_{t-1})$ is the
transition probability density modeling the
dynamics of the system, and $g_t(y_t \mid x_t)$ is the conditional
probability density of measurements modeling the distribution of
measurements. In applications, the densities are usually with respect to the Lebesgue measure or the counting measure, but other reference measures are possible as well.

An important feature of any particle filter algorithm is that it
should converge to the correct distribution as the number of particles $N \to \infty$. This property of particle filters is well studied and there exists a number of convergence results for particle filters (see, e.g., \cite{Moral+Guionnet:1999, Moral+Guionnet:2001, Crisan+Doucet:2000, Crisan+Doucet:2002, Doucet+Freitas+Gordon:2001, Chopin:2004, Moral:2004, Douc+Moulines:2008, Douc+Moulines+Olsson:2009, Hu+Schon+Ljung:2008, Hu+Schon+Ljung:2011, Bain+Crisan:2009, Moral:2013, Whiteley:2013, Mbalawata+Sarkka:2013,Mbalawata+Sarkka:2014} and references therein). However, the effect of importance distribution on the convergence is less studied and it is typical either to assume that the dynamic model is used as the importance distribution, leading to so called bootstrap filter, or that the unnormalized importance weights are point-wise bounded. Although in central limit theorem type analysis of particle filters this point-wise boundedness is not always assumed (see, e.g., \cite{Chopin:2004}), it is a standard assumption in $L^p$-type analysis of particle filters \cite{Crisan+Doucet:2000, Doucet+Freitas+Gordon:2001, Crisan+Doucet:2002}.

In \cite{Mbalawata+Sarkka:2014}, we derived novel moment conditions for importance weights which ensured the $L^4$-convergence of the modified particle filter of \cite{Hu+Schon+Ljung:2008} for the case of unbounded test functions. Unfortunately, the results in \cite{Mbalawata+Sarkka:2014} are not directly applicable to standard particle filters. In this paper, we give the proofs for the mean square convergence, $L^4$-convergence, and the empirical measure convergence for the standard particle filter in the case of potentially unbounded importance weights and bounded test function. This enlarges the class of state space models in which particle filters are ensured to converge. Our proof follows the spirit and many of the ideas of the proofs in \cite{Crisan+Doucet:2000,Crisan+Doucet:2002} although the assumptions and the main results are different.
\section{Particle Filtering}
Particle filters are related to the Bayesian filtering problem,  which refers to the construction of the filtering probability density function $p(x_t\mid y_{1:t})$. The construction of $p(x_t\mid y_{1:t})$ is done recursively by Bayesian filtering equations (see, e.g., \cite{Sarkka:2013}). Let $B(\mathbb{R}^n)$ be the set of 
bounded Borel measurable functions on $\mathbb{R}^n$, $\phi\in B(\mathbb{R}^n)$, $\pi_{t|t-1}$ the measure corresponding to the probability density $p(x_t \mid y_{1:t-1})$, and $\pi_{t|t}$ the measure corresponding to the density $p(x_t \mid y_{1:t})$. Then the Bayesian filtering equations for state space model \eqref{eq:ssmodel} can be written as 
\begin{equation}\label{bayesian}
\begin{split}
 (\pi_{t|t-1},\phi) &= (\pi_{t-1|t-1}, f_t \, \phi), \\
   (\pi_{t|t},\phi) &= \frac{(\pi_{t|t-1},\phi \, g_t)}{(\pi_{t|t-1}, g_t)},
\end{split}
\end{equation}
where
$(\pi, \phi) \triangleq \int \phi \, \mathrm{d} \pi$,
$f \, \phi (x) \triangleq \int f(z\mid x) \, \phi(z) \, \mathrm{d}\nu_f(z)$, and $\nu_f$ is the reference measure used for the density $f$. We assume that the state-space model satisfies the sufficient conditions for the Bayesian filtering equations to have solutions which are regular densities with respect to the chosen reference measure. For existence of solution to \eqref{bayesian}, we have to require that $(\pi_{t|t-1}, g_t)>0$.

Due to intractability of Equations \eqref{bayesian}, for most state space models, we usually need to approximate them. A particle filter for approximating the solutions of \eqref{bayesian} is given in Algorithm~\ref{standard}. In this paper, we provide the mean square, $L^4$, and empirical measure convergence results for general importance distribution $q(x_t	\mid x_{t-1},y_{1:t})$,  regardless of the boundedness of the importance weights.
%
%
%
\begin{algorithm}
\caption{Standard particle filter}
\label{standard}
\begin{itemize}
\item At $t=0$,
 for $i=1,\ldots,N$, sample $x^{(i)}_0\sim\pi_{0|0}(\mathrm{d}x_0)$.

\item At $t\geq 1$,
\begin{itemize}

\item  Sample $\tilde{x}^{(i)}_{t}\sim q(x_t \mid x_{t-1}^{(i)},y_{1:t})$,  for $i=1,\ldots,N$.

\item Calculate the unnormalized weights by
	\begin{equation}
	w_t(\tilde{x}_t^{(i)},x_{t-1}^{(i)}) =
	\frac{g_t(y_t \mid \tilde{x}_t^{(i)}) \, f_t(\tilde{x}_t^{(i)} \mid x_{t-1}^{(i)})}{q(\tilde{x}_t^{(i)}
	\mid x_{t-1}^{(i)},y_{1:t})},
	\end{equation}
 for $i=1,\ldots,N$, and define unnormalized empirical measure $\hat{\pi}^N_{t|t}$ as
 \begin{equation}
   \hat{\pi}^N_{t|t} = \frac{1}{N}
   \sum_{i=1}^N w_t(\tilde{x}_t^{(i)},x_{t-1}^{(i)})
   \, \delta_{\tilde{x}_t^{(i)}},
 \end{equation}
 where $\delta_{x}$ denotes a Dirac delta measure concentrated at $x$.
 
\item Normalize the weights by $\tilde{w}_t^{(i)} = \frac{w_t^{(i)}}{\sum_{i=1}^Nw_t^{(i)}}$, where $w_t^{(i)} = w_t(\tilde{x}_t^{(i)},x_{t-1}^{(i)})$, and define empirical probability measure $\tilde{\pi}^N_{t|t}$ as
 \begin{equation}
   \tilde{\pi}^N_{t|t} =
   \sum_{i=1}^N \tilde{w}_t^{(i)} \, \delta_{\tilde{x}_t^{(i)}}.
 \end{equation}

\item Do resampling to obtain the resampled particles $x_t^{(i)}$, and define empirical probability measure $\pi^N_{t|t}$, which is the approximation to the filtering distribution, as
 \begin{equation}
   \pi^N_{t|t} =
   \frac{1}{N} \sum_{i=1}^N \delta_{x_t^{(i)}}.
 \end{equation}
\item $t\leftarrow t+1$
\end{itemize}
\end{itemize}
\end{algorithm}
%
%
%
%
%

%
%
\section{Convergence of Mean Square Error}\label{sec:mse}
In this section, we derive a novel mean square convergence theorem for particle filters. For the Theorem \ref{conv_theorem}, we impose the following assumptions.
\begin{ass}\label{ass1}
The measurement model $g_t$ is bounded, that is,
there exist a constant $c_g < \infty$ such that $\forall t\in \mathbb{N}$, $\forall x \in \mathbb{R}^n$, and $\forall y \in \mathbb{R}^m$ we have
 $g_t(y\mid x)\leq c_g<\infty$.
\end{ass}

\begin{ass}\label{ass4}
   The resampling procedure satisfies (see, e.g., \cite{Crisan+Doucet:2000,Crisan+Doucet:2002} for the sufficient conditions for this):
\begin{equation}
\E\left[\Big| (\pi_{t|t}^N,\phi) - (\tilde{\pi}^N_{t|t},\phi)\Big|^2\right]
  \leq C_{t} \frac{\|\phi\|^2}{N},
\end{equation}
where $\|\phi\| \triangleq  \sup_{x\in \mathbb{R}^n} |\phi(x)|$ and $C_t<\infty$ is a constant.
\end{ass}
\begin{ass}\label{ass3}
The importance density $q$ is satisfies the following condition. 
Let
\begin{equation}
  w_t(x_t,x_{t-1}) = \frac{g(y_t \mid x_t) \, f(x_t \mid x_{t-1})}
  {q(x_t \mid x_{t-1},y_{1:t})}
\label{eq:uiwf}
\end{equation}
be the unnormalized importance weight function, $\forall t\in \mathbb{N}$ and $x_{t-1}\in\mathbb{R}^n$. Then $\E[w_t^2(x_t,x_{t-1})\mid x_{t-1}] \le C_w < \infty$, with the expectation taken over $q(x_t \mid x_{t-1},y_{1:t})$.
\end{ass}
\begin{thm}
\label{conv_theorem}
Provided that Assumptions \ref{ass1}, \ref{ass4} and \ref{ass3} hold for all $t\geq 0$, then there exist a constant
$c_t<\infty$ such that, for bounded function $\phi \in B(\mathbb{R}^n)$
\begin{equation}\label{theorem}
\E\left[\Big| (\pi_{t|t}^N,\phi) - (\pi_{t|t},\phi)\Big|^2\right] \leq c_{t} \frac{\|\phi\|^2}{N}.
\end{equation}
\end{thm}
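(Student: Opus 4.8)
The plan is to prove \eqref{theorem} by induction on $t$, following the standard three-step decomposition of the particle filter recursion (prediction, Bayes update, resampling), but tracking the importance-weight moment bound from Assumption~\ref{ass3} in place of a pointwise bound. The base case $t=0$ is immediate from the ordinary Monte Carlo error for i.i.d.\ samples from $\pi_{0|0}$, which gives $\E[|(\pi_{0|0}^N,\phi)-(\pi_{0|0},\phi)|^2]\le \|\phi\|^2/N$. For the inductive step, assume the bound holds at time $t-1$ with constant $c_{t-1}$, and write the total error at time $t$ as a sum of three contributions: (i) the sampling/prediction error incurred when drawing $\tilde x_t^{(i)}\sim q(\cdot\mid x_{t-1}^{(i)},y_{1:t})$ and forming $\hat\pi_{t|t}^N$; (ii) the error in passing from the unnormalized measure $\hat\pi_{t|t}^N$ to the normalized measure $\tilde\pi_{t|t}^N$; and (iii) the resampling error, which is handled directly by Assumption~\ref{ass4}. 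I would apply the triangle inequality in $L^2(\E)$ so that it suffices to bound each of the three terms by $(\mathrm{const})\,\|\phi\|^2/N$.

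For step (i), observe that conditionally on the particles $x_{t-1}^{(i)}$, the summands $w_t(\tilde x_t^{(i)},x_{t-1}^{(i)})\phi(\tilde x_t^{(i)})$ are independent across $i$ with conditional mean equal to $(f_t\phi g_t)(x_{t-1}^{(i)})$ evaluated appropriately, so that $\E[(\hat\pi_{t|t}^N,\phi)\mid \mathcal{F}_{t-1}] = (\pi_{t-1|t-1}^N, f_t\,\phi\,g_t)$ in the notation of \eqref{bayesian}. The conditional variance of $(\hat\pi_{t|t}^N,\phi)$ is then $\tfrac1N$ times the average of the per-particle variances, each of which is bounded by $\E[w_t^2(\tilde x_t^{(i)},x_{t-1}^{(i)})\,\phi^2(\tilde x_t^{(i)})\mid x_{t-1}^{(i)}] \le \|\phi\|^2\,\E[w_t^2\mid x_{t-1}^{(i)}] \le C_w\|\phi\|^2$ by Assumption~\ref{ass3}. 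Taking total expectation and combining with the inductive hypothesis applied to the bounded function $f_t\,\phi\,g_t$ (whose sup norm is at most $c_g\|\phi\|$ by Assumption~\ref{ass1}, since $\int f_t(z\mid x)\mathrm{d}\nu_f(z)=1$) controls $\E[|(\hat\pi_{t|t}^N,\phi)-(\pi_{t|t-1},\phi\,g_t)|^2]$ by $(\mathrm{const})\|\phi\|^2/N$.

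For step (ii), write $(\tilde\pi_{t|t}^N,\phi) = (\hat\pi_{t|t}^N,\phi)/(\hat\pi_{t|t}^N,\mathbf 1)$ and $(\pi_{t|t},\phi) = (\pi_{t|t-1},\phi\,g_t)/(\pi_{t|t-1},g_t)$, and use the elementary identity
\begin{equation}
(\tilde\pi_{t|t}^N,\phi) - (\pi_{t|t},\phi)
= \frac{(\hat\pi_{t|t}^N,\phi) - (\pi_{t|t-1},\phi\,g_t)}{(\hat\pi_{t|t}^N,\mathbf 1)}
+ (\pi_{t|t},\phi)\,\frac{(\pi_{t|t-1},g_t) - (\hat\pi_{t|t}^N,\mathbf 1)}{(\hat\pi_{t|t}^N,\mathbf 1)}.
\end{equation}
Here the key point is that $|(\pi_{t|t},\phi)|\le\|\phi\|$ and, using $g_t=\phi'$ with $\phi'\equiv 1$ in the step~(i) estimate, both numerators are controlled in $L^2$; the only remaining obstacle is handling the random denominator $(\hat\pi_{t|t}^N,\mathbf 1)$, which we want to bound away from zero. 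This is the main obstacle, because under only a second-moment assumption on the weights the normalizing constant is not bounded below deterministically. The standard device — following \cite{Crisan+Doucet:2000,Crisan+Doucet:2002} — is to split on the event where $(\hat\pi_{t|t}^N,\mathbf 1)$ is close to its limit $(\pi_{t|t-1},g_t)>0$ and its complement; on the good event the denominator is $\gtrsim (\pi_{t|t-1},g_t)$, and the bad event has probability $O(1/N)$ by Chebyshev applied to the step~(i) variance bound with $\phi\equiv 1$, while on the bad event the integrand is crudely bounded using $|(\tilde\pi_{t|t}^N,\phi)|\le\|\phi\|$ together with a Cauchy--Schwarz argument so that its contribution is still $O(\|\phi\|^2/N)$. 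Finally, step (iii) adds $C_t\|\phi\|^2/N$ from Assumption~\ref{ass4}, and collecting the three bounds and absorbing all constants into a single $c_t<\infty$ completes the induction.
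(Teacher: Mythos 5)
Your proposal is correct in substance and follows the same overall architecture as the paper: induction on $t$ with the trivial i.i.d.\ base case, a combined sampling-and-weighting step whose conditional mean is $(\pi_{t-1|t-1}^N, f_t\,\phi\,g_t)$ and whose conditional variance is controlled by the second-moment condition of Assumption~\ref{ass3} (this is exactly the paper's split into $\Pi_1$ and $\Pi_2$, with the inductive hypothesis applied to $f_t\,\phi\,g_t$, $\|f_t\,\phi\,g_t\|\le c_g\|\phi\|$), and the resampling error absorbed via Assumption~\ref{ass4}. Where you genuinely diverge is the normalization step. You keep the random normalizer $(\hat{\pi}_{t|t}^N,\mathbf 1)$ in the denominator and therefore must bound it away from zero, which forces the good/bad-event truncation (Chebyshev on $\{|(\hat{\pi}_{t|t}^N,\mathbf 1)-(\pi_{t|t-1},g_t)|$ large$\}$ plus the crude bound $|(\tilde{\pi}_{t|t}^N,\phi)-(\pi_{t|t},\phi)|\le 2\|\phi\|$ on the bad set); this works and yields the same $O(\|\phi\|^2/N)$ rate, with constants depending on $(\pi_{t|t-1},g_t)^{-1}$ just as in the paper. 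The paper instead uses the Crisan--Doucet algebraic decomposition
\begin{equation*}
(\tilde{\pi}_{t|t}^N,\phi)-(\pi_{t|t},\phi)
= \frac{(\hat{\pi}_{t|t}^N,\phi)}{(\hat{\pi}_{t|t}^N,\mathbf 1)}\cdot
\frac{(\hat{\pi}_{t|t},\mathbf 1)-(\hat{\pi}_{t|t}^N,\mathbf 1)}{(\hat{\pi}_{t|t},\mathbf 1)}
+ \frac{(\hat{\pi}_{t|t}^N,\phi)-(\hat{\pi}_{t|t},\phi)}{(\hat{\pi}_{t|t},\mathbf 1)},
\end{equation*}
in which the ratio $(\hat{\pi}_{t|t}^N,\phi)/(\hat{\pi}_{t|t}^N,\mathbf 1)=(\tilde{\pi}_{t|t}^N,\phi)$ is bounded by $\|\phi\|$ and only the \emph{deterministic} normalizer $(\hat{\pi}_{t|t},\mathbf 1)=(\pi_{t|t-1},g_t)>0$ appears in denominators; the ``main obstacle'' you identify then never arises, and Minkowski's inequality finishes the step in two lines. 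So your route buys nothing extra here and costs an additional probability estimate, but it is a valid alternative; if you prefer a shorter argument, replace your identity by the one above and the truncation becomes unnecessary.
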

Using Assumptions \ref{ass1}, \ref{ass4} and \ref{ass3}, we now aim to prove Theorem \ref{conv_theorem}, for the case where the importance weights are not necessarily (point-wise) bounded. In the following we use the notation
$g \triangleq g_t$, $f \triangleq f_t$ and $w_t \triangleq w_t(x_t,x_{t-1})$. Additionally, $\mathsf{F}_{t-1}$ denotes the $\sigma$-field generated by the particles $\{x_{t-1}^{(i)}\}_{i=1}^N$ and $\tilde{\pi}_{t|t}$  the empirical measure before the resampling step.

\begin{proof}
For each step (initialization, prediction, update and resampling step) of Algorithm \ref{standard}, we compute the bound for mean square error. However,
to cope with general importance distribution as in \cite{Mbalawata+Sarkka:2013,Mbalawata+Sarkka:2014}, we combine the prediction and update steps, hence
the Bayesian filtering equations \eqref{bayesian} can be re-written as
\begin{equation} \label{combined_equation}
\begin{split}
  (\pi_{t|t},\phi) = \frac{(\pi_{t-1|t-1}, f\,\phi \, g) }{(\pi_{t-1|t-1}, f\,g)}=
	\frac{(\hat{\pi}_{t|t}, \phi)}{(\hat{\pi}_{t|t}, 1)},
\end{split}
\end{equation}
where	$\hat{\pi}_{t|t}(\mathrm{d}x_t) = (\int w_t(x_t,x_{t-1}) \, q(x_t\mid x_{t-1},y_{1:t})\, \mathrm{d}\pi_{t-1|t-1})\,\mathrm{d}x_t$. 

At initial step, $t=0$, we have $\E\left[\left| (\pi_{0|0}^N,\phi) - (\pi_{0|0},\phi)\right|^2\right] \leq c_{0} \frac{\|\phi\|^2}{N}$, because the $N$ particles from the prior distribution ($\pi_{0|0}^N$) are assumed to be independent and identically distributed.
We now aim to prove the corresponding result for all $t\geq 1$, by using an induction argument. The result follows by proving Lemma \ref{l2} (for combined prediction-update steps) and Lemma \ref{l3} (for resampling step).
\end{proof}
%
%
%
\begin{lem}\label{l2}
Let us assume that for $\phi \in B(\mathbb{R}^n)$ and Assumptions \ref{ass1}, \ref{ass4} and \ref{ass3} hold, we have
\begin{equation}\label{eq_pre}
\E\left[\Big| (\pi_{t-1|t-1}^N,\phi) - (\pi_{t-1|t-1},\phi)\Big|^2\right] \leq c_{t-1} \frac{\|\phi\|^2}{N}.
\end{equation}
Then
\begin{equation}\label{up_pre}
\E\left[\Big| (\tilde{\pi}_{t|t}^N,\phi) - (\pi_{t|t},\phi)\Big|^2\right] \leq \tilde{c}_{t} \frac{\|\phi\|^2 }{N}.
\end{equation}
\end{lem}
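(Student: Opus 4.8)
The plan is to reduce the assertion to a variance estimate for the \emph{unnormalised} empirical measure $\hat\pi_{t|t}^N$ and to deal with the self-normalisation by introducing a \emph{deterministic} denominator. By construction in Algorithm~\ref{standard} we have $(\tilde\pi_{t|t}^N,\phi) = (\hat\pi_{t|t}^N,\phi)/(\hat\pi_{t|t}^N,1)$, while \eqref{combined_equation} gives $(\pi_{t|t},\phi) = (\hat\pi_{t|t},\phi)/(\hat\pi_{t|t},1)$, where the normalising constant $(\hat\pi_{t|t},1) = (\pi_{t-1|t-1}, f\,g) = (\pi_{t|t-1}, g)$ is deterministic and, by the standing requirement for \eqref{bayesian}, strictly positive. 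I would start from the algebraic identity
\[
  (\tilde\pi_{t|t}^N,\phi) - (\pi_{t|t},\phi)
  = \frac{(\hat\pi_{t|t}^N,\phi) - (\hat\pi_{t|t},\phi)}{(\hat\pi_{t|t},1)}
    + (\tilde\pi_{t|t}^N,\phi)\,\frac{(\hat\pi_{t|t},1) - (\hat\pi_{t|t}^N,1)}{(\hat\pi_{t|t},1)},
\]
and then, using $|(\tilde\pi_{t|t}^N,\phi)| \le \|\phi\|$ (since $\tilde\pi_{t|t}^N$ is a probability measure) together with $(a+b)^2 \le 2a^2 + 2b^2$, reduce the claim to proving
\[
  \E\bigl[\, |(\hat\pi_{t|t}^N,h) - (\hat\pi_{t|t},h)|^2\bigr] \le C_h\,\frac{\|h\|^2}{N}
  \qquad\text{for every } h \in B(\mathbb{R}^n),
\]
applied once with $h = \phi$ and once with $h \equiv 1$; the factor $(\pi_{t|t-1},g)^{-2}$ is then absorbed into $\tilde c_t$.

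For that intermediate bound I would insert the $\mathsf{F}_{t-1}$-conditional mean of $\hat\pi_{t|t}^N$, namely the random measure $\mu_t^N$ defined by
\[
  (\mu_t^N, h) = \frac{1}{N}\sum_{i=1}^N \int w_t(x_t, x_{t-1}^{(i)})\, h(x_t)\, q(x_t \mid x_{t-1}^{(i)}, y_{1:t})\,\mathrm{d}x_t
  = \E\bigl[(\hat\pi_{t|t}^N, h) \bigm| \mathsf{F}_{t-1}\bigr],
\]
and split $(\hat\pi_{t|t}^N,h) - (\hat\pi_{t|t},h)$ into the sampling part $(\hat\pi_{t|t}^N,h) - (\mu_t^N,h)$ and the propagated part $(\mu_t^N,h) - (\hat\pi_{t|t},h)$. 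Conditionally on $\mathsf{F}_{t-1}$ the terms $w_t(\tilde x_t^{(i)}, x_{t-1}^{(i)})\,h(\tilde x_t^{(i)})$ are independent with the correct conditional mean, so a conditional-variance computation gives
\[
  \E\bigl[\, |(\hat\pi_{t|t}^N,h) - (\mu_t^N,h)|^2 \bigm| \mathsf{F}_{t-1}\bigr]
  \le \frac{1}{N^2}\sum_{i=1}^N \E\bigl[\, w_t^2(\tilde x_t^{(i)}, x_{t-1}^{(i)})\,h^2(\tilde x_t^{(i)}) \bigm| \mathsf{F}_{t-1}\bigr]
  \le \frac{\|h\|^2\, C_w}{N},
\]
which is exactly where Assumption~\ref{ass3} replaces point-wise boundedness of the weights; taking expectations removes the conditioning. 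For the propagated part, $(\mu_t^N,h) - (\hat\pi_{t|t},h) = (\pi_{t-1|t-1}^N - \pi_{t-1|t-1}, \psi_h)$ with $\psi_h(x_{t-1}) \triangleq \int w_t(x_t, x_{t-1})\,h(x_t)\,q(x_t\mid x_{t-1}, y_{1:t})\,\mathrm{d}x_t$; since $|\psi_h(x_{t-1})| \le \|h\|\, \E[w_t \mid x_{t-1}] \le c_g\,\|h\|$ by Assumption~\ref{ass1} (equivalently, by Jensen's inequality and Assumption~\ref{ass3}), $\psi_h$ is a bounded Borel function, so the induction hypothesis \eqref{eq_pre} yields $\E[|(\pi_{t-1|t-1}^N - \pi_{t-1|t-1}, \psi_h)|^2] \le c_{t-1}\,c_g^2\,\|h\|^2/N$. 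Adding the two estimates gives the required $O(\|h\|^2/N)$ bound, and substituting back into the first display completes the induction with an explicit $\tilde c_t$ depending only on $c_{t-1}$, $C_w$, $c_g$ and $(\pi_{t|t-1},g)$.

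The main obstacle — indeed the only delicate point — is the random normaliser $(\hat\pi_{t|t}^N,1)$ appearing in the denominator of $(\tilde\pi_{t|t}^N,\phi)$: it cannot be bounded away from zero, so one must avoid dividing by it. The remedy is precisely the rearrangement in the first display, which trades the random denominator for the deterministic, strictly positive constant $(\hat\pi_{t|t},1)=(\pi_{t|t-1},g)$ at the cost only of the harmless factor $(\tilde\pi_{t|t}^N,\phi)$, automatically bounded by $\|\phi\|$. Everything else is the familiar two-term (sampling $+$ propagated) decomposition in the spirit of \cite{Crisan+Doucet:2000,Crisan+Doucet:2002}, with Assumption~\ref{ass3} entering exactly where point-wise boundedness of the weights was used there: to control the variance of the weighted particles and to keep $\psi_h$ a bounded test function to which \eqref{eq_pre} applies.
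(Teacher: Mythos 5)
Your proposal is correct and follows essentially the same route as the paper's proof: the same decomposition with the deterministic denominator $(\hat{\pi}_{t|t},1)=(\pi_{t|t-1},g)$ (the paper's bound \eqref{eq2}), the same split of $(\hat{\pi}_{t|t}^N,h)-(\hat{\pi}_{t|t},h)$ into a conditional sampling term controlled by the second-moment Assumption~\ref{ass3} and a propagated term handled by the induction hypothesis applied to the bounded function $\psi_h$ (the paper's $f\,\phi\,g$ with $\|f\phi\|\le\|\phi\|$, $\|g\|\le c_g$). The only cosmetic differences are that you write out the algebraic identity explicitly and use $(a+b)^2\le 2a^2+2b^2$ where the paper invokes Minkowski's inequality.
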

\begin{proof}
Given $\mathsf{F}_{t-1}$, the $\sigma$-field generated by $\{x_{t-1}^{(i)}\}_{i=1}^N$, then
\begin{equation}\label{eq:field}
\E[(\hat{\pi}_{t|t}^N , \phi)\mid \mathsf{F}_{t-1} ]  = (\pi_{t-1|t-1}^N,f \, \phi \, g)
\end{equation}
and, from Assumption \ref{ass3}, we can easily show the boundedness of $\E[(w_t^{(i)})^2\mid \mathsf{F}_{t-1}]$:
\begin{rmk} \label{rmk}
Provided that  $\E[(w_t^{(i)})^2\mid x_{t-1}]$ is bounded, then
$\E[(w_t^{(i)})^2\mid \mathsf{F}_{t-1}]$ is bounded as well.
\end{rmk}
We know that
\begin{align}
(\tilde{\pi}_{t|t}^N , \phi) -(\pi_{t|t} , \phi)
&\leq
\frac{\|\phi\|}{(\hat{\pi}_{t|t}, 1)}
\Big[(\hat{\pi}_{t|t}, 1)-(\hat{\pi}_{t|t}^N, 1)\Big] \nonumber \\ &+
\frac{1}{(\hat{\pi}_{t|t}, 1)}
\Big[(\hat{\pi}_{t|t}^N, \phi)-(\hat{\pi}_{t|t}, \phi)\Big],
\label{eq2}
\end{align}
where $(\hat{\pi}_{t|t}, 1) = (\pi_{t|{t-1}}, g) > 0$ by our assumptions. 
To prove Equation~\eqref{up_pre}, we need to evaluate the bounds for
$\E[|(\hat{\pi}_{t|t}^N, \phi)-(\hat{\pi}_{t|t}, \phi)|^2]$ and
$\E[|(\hat{\pi}_{t|t}^N, 1)-(\hat{\pi}_{t|t}, 1)|^2]$. We first evaluate the bound for the former expression, from which the latter will follow by setting $\phi=1$.
We define $(\hat{\pi}_{t|t}^N, \phi)-(\hat{\pi}_{t|t}, \phi) = \Pi_1 + \Pi_2$, where
\begin{align*}
\Pi_1&=
(\hat{\pi}_{t|t}^N, \phi)-\E[(\hat{\pi}_{t|t}^N, \phi)\mid \mathsf{F}_{t-1}],\\
\Pi_2&=
\E[(\hat{\pi}_{t|t}^N, \phi)\mid \mathsf{F}_{t-1}] - (\hat{\pi}_{t|t}, \phi).
\end{align*}
We compute $\E[|\Pi_1|^2]$ and $\E[|\Pi_2|^2]$ as follows.
Using the boundedness of $\phi$, Equation \eqref{eq:field} and Remark \ref{rmk}, we get
\begin{align}
&\E\Big[\Big|\Pi_1\Big|^2\mid \mathsf{F}_{t-1}\Big]
\le \frac{1}{N}\E\left[\frac{1}{N}\left(\sum_{i=1}^N\phi(x_t^{(i)}) \,  w_t^{(i)}\right)^2 \mid \mathsf{F}_{t-1} \right]\nonumber\\
&\leq \frac{\|\phi\|^2}{N^2}\sum_{i=1}^N
\E\left[|w_t^{(i)}|^2 \mid \mathsf{F}_{t-1}\right]
\leq 
\tilde{c}_{t_1} \, \frac{\|\phi\|^2}{N}. \label{eqpart1}
\end{align}
For the second part, using Equations \eqref{eq_pre} and \eqref{eq:field} as well as $\|f\phi\|\leq\|\phi\|$, we get
\begin{align}
&\E\Big[\Big|\Pi_2\Big|^2\mid \mathsf{F}_{t-1}\Big]\nonumber\\
&=\E\Big[\Big|(\pi^N_{t-1|t-1} , f \phi g)- (\pi_{t-1|t-1} , f \phi g)\Big|^2\mid \mathsf{F}_{t-1}\Big]
\nonumber\\
&\leq c_{t-1}  \frac{\| \phi\|^2 \,  \|g \|^2 }{N}= \tilde{c}_{t_2} \frac{\|\phi\|^2 }{N}.\label{eqpart2}
\end{align}
Using Minkowski inequality, we combine \eqref{eqpart1} and \eqref{eqpart2} to get
\begin{equation}
\E\Big[\Big|  (\hat{\pi}_{t|t}^N, \phi)-(\hat{\pi}_{t|t}, \phi)\Big|^2\Big]
\leq  \hat{c}_t\frac{\|\phi\|^2}{N}, \label{tele}
\end{equation}
which, with $\phi = 1$,  implies
$\E\Big[\Big|  (\hat{\pi}_{t|t}^N, 1)-(\hat{\pi}_{t|t}, 1)\Big|^2\Big]
\leq  \hat{c}_t\frac{1}{N}$.
Using these results and the Minkowski inequality to \eqref{eq2}:
\begin{align*}
&\left(\E\Big[\Big|(\tilde{\pi}_{t|t}^N , \phi) -(\pi_{t|t} , \phi)\Big|^2 \Big]\right)^{1/2}\nonumber\\
%
%
&\leq
\left(\frac{1}{(\hat{\pi}_{t|t}, 1)}
\sqrt{\hat{c}_t} +\frac{1}{(\hat{\pi}_{t|t}, 1)}
\sqrt{\hat{c}_t} \right)\frac{\|\phi\| }{N^{1/2}}
=\sqrt{\tilde{c}_t}\frac{\|\phi\| }{N^{1/2}},
\end{align*}
%
%
which completes the proof of Lemma \ref{l2}.
\end{proof}
%
%
\begin{lem}\label{l3}
Assume that Assumptions \ref{ass1}, \ref{ass4} and \ref{ass3} hold and that
\begin{equation*}
\E\left[\Big| (\tilde{\pi}_{t|t}^N,\phi) - (\pi_{t|t},\phi)\Big|^2\right] \leq \tilde{c}_{t} \frac{\|\phi\|^2 }{N}.
\end{equation*}
Then
\begin{equation}\label{res}
\E\left[\Big| (\pi_{t|t}^N,\phi) - (\pi_{t|t},\phi)\Big|^2\right] \leq c_{t}\frac{\|\phi\|^2}{N}.
\end{equation}
\end{lem}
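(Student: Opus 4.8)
The plan is to treat the resampling step by the triangle inequality, inserting the pre-resampling empirical measure $\tilde{\pi}^N_{t|t}$ as an intermediate term, since one of the two resulting pieces is controlled by the hypothesis of the lemma and the other is exactly what Assumption \ref{ass4} provides.

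Concretely, I would first write the decomposition
\begin{equation*}
(\pi_{t|t}^N,\phi) - (\pi_{t|t},\phi) = \Big[(\pi_{t|t}^N,\phi) - (\tilde{\pi}_{t|t}^N,\phi)\Big] + \Big[(\tilde{\pi}_{t|t}^N,\phi) - (\pi_{t|t},\phi)\Big],
\end{equation*}
and then apply the Minkowski inequality in $L^2(\mathbb{P})$:
\begin{equation*}
\Big(\E\big[| (\pi_{t|t}^N,\phi) - (\pi_{t|t},\phi)|^2\big]\Big)^{1/2}
\le \Big(\E\big[| (\pi_{t|t}^N,\phi) - (\tilde{\pi}_{t|t}^N,\phi)|^2\big]\Big)^{1/2}
+ \Big(\E\big[| (\tilde{\pi}_{t|t}^N,\phi) - (\pi_{t|t},\phi)|^2\big]\Big)^{1/2}.
\end{equation*}
The first term on the right is bounded by $\sqrt{C_t}\,\|\phi\|/\sqrt{N}$ directly from Assumption \ref{ass4}; the second term is bounded by $\sqrt{\tilde{c}_t}\,\|\phi\|/\sqrt{N}$ by the hypothesis of the lemma. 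Adding these and squaring yields \eqref{res} with the constant $c_t = \big(\sqrt{C_t} + \sqrt{\tilde{c}_t}\big)^2 < \infty$.

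There is no genuine obstacle here: the analytic work was already carried out in Lemma \ref{l2}, and the resampling step is, by design, reduced to a single application of Minkowski together with the assumed resampling bound. The only points worth stating carefully are that Assumption \ref{ass4} is phrased precisely so as to supply the needed $O(\|\phi\|^2/N)$ estimate for the resampling error, and that combining Lemma \ref{l2} and Lemma \ref{l3} closes the induction invoked in the proof of Theorem \ref{conv_theorem}: the base case $t=0$ holds by i.i.d.\ sampling from the prior, and the inductive step propagates the constant as $c_{t-1}\mapsto \tilde{c}_t \mapsto c_t$, which establishes \eqref{theorem} for all $t\ge 0$.
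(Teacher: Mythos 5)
Your proof is correct and follows essentially the same route as the paper: the same decomposition through $\tilde{\pi}_{t|t}^N$, Minkowski's inequality in $L^2$, Assumption \ref{ass4} for the resampling error, and the hypothesis of the lemma for the remaining term, giving $c_t=(\sqrt{C_t}+\sqrt{\tilde{c}_t})^2$. Nothing is missing.
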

\begin{proof}

If we define
$(\pi_{t|t}^N,\phi) - (\pi_{t|t},\phi) =
(\pi_{t|t}^N,\phi) - (\tilde{\pi}_{t|t}^N,\phi) +
(\tilde{\pi}_{t|t}^N,\phi) - (\pi_{t|t},\phi)$, 
then, using Minkowski inequality together with Assumption \ref{ass4} and results of Lemma \ref{l2} we have
\begin{align}
&\left(\E\left[\Big|(\pi_{t|t}^N,\phi) - (\pi_{t|t},\phi) \Big|^2\right]\right)^{1/2}\nonumber\\
&\leq\sqrt{C_t}\frac{\|\phi\| }{N^{1/2}} + \sqrt{\tilde{c}_t}\frac{\|\phi\| }{N^{1/2}}=\sqrt{c_t}\frac{\|\phi\| }{N^{1/2}},\nonumber
\end{align}
which implies that
\begin{align*}
\E\left[\Big|(\pi_{t|t}^N,\phi) - (\pi_{t|t},\phi) \Big|^2\right]
&\leq c_t\frac{\|\phi\|^2 }{N}.
\end{align*}

\end{proof}
%
%
%
%
%
%
\section{The $L^4$ and Empirical Measure Convergence}
In this section we generalize the above $L^2$-convergence results to 
$L^4$-convergence and empirical measure convergence.

\subsection{The $L^4$-Convergence}
To guarantee the $L^4$-convergence results, we use Assumption \ref{ass1} together with the following assumptions.
%
%
\begin{ass}\label{measure4}
The resampling procedure satisfies the condition \cite{Crisan+Doucet:2000}:
\begin{equation}
\E\left[\Big| (\pi_{t|t}^N,\phi) - (\tilde{\pi}^N_{t|t},\phi)\Big|^4\right]
  \leq C_{t} \frac{\|\phi\|^4}{N^2}.
\end{equation}
\end{ass}
\begin{ass}\label{measure0}
Let $w_t(x_t,x_{t-1})$ be the unnormalized importance weight function defined in \eqref{eq:uiwf}, $\forall t\in \mathbb{N}$ and $x_{t-1}\in\mathbb{R}^n$, then $\E[w_t^4(x_t,x_{t-1})\mid x_{t-1}] \le C_w < \infty$, with the expectation taken over $q(x_t \mid x_{t-1},y_{1:t})$.
\end{ass}
%
%
\begin{rmk} \label{rmk1}
Provided that  $\E[(w_t^{(i)})^4\mid x_{t-1}]$ is bounded, then
$\E[(w_t^{(i)})^4\mid \mathsf{F}_{t-1}]$ is bounded as well.
\end{rmk}
For the $L^4$-convergence, we need to prove the following theorem.
\begin{thm}
Provided that Assumptions \ref{ass1}, \ref{measure4} and \ref{measure0} hold for all $t\geq 0$, then
for $\phi \in B(\mathbb{R}^n)$ we have 
\begin{equation}
\E\left[\Big| (\pi_{t|t}^N,\phi) - (\pi_{t|t},\phi)\Big|^4\right] \leq c_{t}\frac{\|\phi\|^4}{N^2}.
\end{equation}
\end{thm}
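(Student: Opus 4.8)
The plan is to follow the skeleton of the mean-square argument, replacing every second-moment / Minkowski-in-$L^2$ step by its $L^4$ analogue, and to prove the bound by induction on $t$ via a combined prediction--update lemma and a resampling lemma. For the base case $t=0$, the $N$ particles are i.i.d.\ draws from $\pi_{0|0}$, so $(\pi_{0|0}^N,\phi)-(\pi_{0|0},\phi)$ is an average of $N$ independent, mean-zero random variables bounded by $2\|\phi\|$, and the standard fourth-moment bound for such sums gives $\E[|(\pi_{0|0}^N,\phi)-(\pi_{0|0},\phi)|^4]\le c_0\|\phi\|^4/N^2$.

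For the induction step I would first prove a prediction--update lemma: assuming $\E[|(\pi_{t-1|t-1}^N,\phi)-(\pi_{t-1|t-1},\phi)|^4]\le c_{t-1}\|\phi\|^4/N^2$ for all $\phi\in B(\mathbb{R}^n)$, conclude $\E[|(\tilde\pi_{t|t}^N,\phi)-(\pi_{t|t},\phi)|^4]\le\tilde c_t\|\phi\|^4/N^2$. Since $(\hat\pi_{t|t},1)=(\pi_{t|t-1},g)>0$ and $(\tilde\pi_{t|t}^N,\phi)=(\hat\pi_{t|t}^N,\phi)/(\hat\pi_{t|t}^N,1)$ with $|(\tilde\pi_{t|t}^N,\phi)|\le\|\phi\|$, the same ratio manipulation as in \eqref{eq2} reduces the task to bounding $\E[|(\hat\pi_{t|t}^N,\phi)-(\hat\pi_{t|t},\phi)|^4]$ (the case $\phi=1$ taking care of the normalisation term). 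I would then use the decomposition $(\hat\pi_{t|t}^N,\phi)-(\hat\pi_{t|t},\phi)=\Pi_1+\Pi_2$ exactly as in the $L^2$ proof. For $\Pi_2$, relation \eqref{eq:field} gives $\Pi_2=(\pi_{t-1|t-1}^N,f\phi g)-(\pi_{t-1|t-1},f\phi g)$, and since $\|f\phi g\|\le c_g\|\phi\|$ by Assumption~\ref{ass1}, the induction hypothesis applied to the bounded test function $f\phi g$ yields $\E[|\Pi_2|^4]\le c_{t-1}\,c_g^4\,\|\phi\|^4/N^2$.

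The crucial estimate is the one for $\Pi_1$. Conditionally on $\mathsf{F}_{t-1}$ we have $\Pi_1=\frac{1}{N}\sum_{i=1}^N Z_i$ with $Z_i=\phi(\tilde x_t^{(i)})w_t^{(i)}-\E[\phi(\tilde x_t^{(i)})w_t^{(i)}\mid\mathsf{F}_{t-1}]$ independent and mean-zero; expanding $\E[(\sum_i Z_i)^4\mid\mathsf{F}_{t-1}]$, every term with an isolated factor $\E[Z_i\mid\mathsf{F}_{t-1}]$ vanishes, leaving only the $O(N)$ diagonal terms $\E[Z_i^4\mid\mathsf{F}_{t-1}]$ and the $O(N^2)$ pair terms $\E[Z_i^2\mid\mathsf{F}_{t-1}]\,\E[Z_j^2\mid\mathsf{F}_{t-1}]$. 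Boundedness of $\phi$ together with Remark~\ref{rmk1} (which promotes Assumption~\ref{measure0} to a bound on $\E[(w_t^{(i)})^4\mid\mathsf{F}_{t-1}]$, and hence, by Jensen, on $\E[(w_t^{(i)})^2\mid\mathsf{F}_{t-1}]$) bounds each of these by a constant times $\|\phi\|^4$, so $\E[|\Pi_1|^4\mid\mathsf{F}_{t-1}]\le\frac{1}{N^4}(cN+3cN^2)\|\phi\|^4\le\hat c_{t_1}\|\phi\|^4/N^2$. I expect this conditional fourth-moment computation---keeping the combinatorics straight so that only the $N$- and $N^2$-order terms survive---to be the main obstacle; the rest is bookkeeping. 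Combining $\E[|\Pi_1|^4]$ and $\E[|\Pi_2|^4]$ with the Minkowski inequality (now in $L^4$) gives $\E[|(\hat\pi_{t|t}^N,\phi)-(\hat\pi_{t|t},\phi)|^4]\le\hat c_t\|\phi\|^4/N^2$, and substituting this (and its $\phi=1$ instance) into the ratio bound, again via Minkowski in $L^4$, proves the lemma.

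Finally, for the resampling lemma I would write $(\pi_{t|t}^N,\phi)-(\pi_{t|t},\phi)=\big[(\pi_{t|t}^N,\phi)-(\tilde\pi_{t|t}^N,\phi)\big]+\big[(\tilde\pi_{t|t}^N,\phi)-(\pi_{t|t},\phi)\big]$, bound the first bracket in $L^4$ by Assumption~\ref{measure4} and the second by the prediction--update lemma, and combine with Minkowski in $L^4$ to get $\E[|(\pi_{t|t}^N,\phi)-(\pi_{t|t},\phi)|^4]\le c_t\|\phi\|^4/N^2$. Chaining the two lemmas through the induction on $t$ then completes the proof.
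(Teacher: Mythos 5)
Your proposal is correct and follows essentially the same route as the paper: the same induction over $t$, the same combined prediction--update decomposition $(\hat{\pi}_{t|t}^N,\phi)-(\hat{\pi}_{t|t},\phi)=\Pi_1+\Pi_2$ with the ratio bound \eqref{eq2}, the induction hypothesis applied to $f\phi g$ for $\Pi_2$, and the resampling step handled by Assumption~\ref{measure4} with Minkowski in $L^4$. The only cosmetic difference is that you carry out the conditional fourth-moment expansion of $\Pi_1$ (diagonal plus pair terms for conditionally independent, mean-zero summands) by hand, whereas the paper obtains exactly this bound by citing Lemmas 7.1 and 7.2 of Hu, Sch\"on and Ljung together with Remark~\ref{rmk1}.
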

\begin{proof}
Certainly, this is true for $t = 0$ and the cases for $t \ge 1$ result follows from Lemmas \ref{lemma_2} and \ref{lemma_3} below together with an induction argument.
\end{proof}
%
%
%
\begin{lem}\label{lemma_2}
Assume  Assumptions \ref{ass1}, \ref{measure4} and \ref{measure0} hold and we have
\begin{equation}\label{indu}
\E\left[\Big| (\pi_{t-1|t-1}^N,\phi) - (\pi_{t-1|t-1},\phi)\Big|^4\right] \leq c_{t-1}\frac{\|\phi\|^4}{N^2}.
\end{equation}
Then
\begin{equation}
\E\left[\Big| (\tilde{\pi}_{t|t}^N,\phi) - (\pi_{t|t},\phi)\Big|^4\right]
\leq \tilde{c}_{t}\frac{\|\phi\|^4}{N^2}.
\end{equation}
\end{lem}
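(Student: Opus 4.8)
The plan is to follow the proof of Lemma~\ref{l2} step by step, replacing every $L^2$ estimate by its $L^4$ counterpart. As in \eqref{combined_equation} we combine the prediction and update steps through the measure $\hat{\pi}_{t|t}$, and we start from the splitting \eqref{eq2}. By the Minkowski inequality in $L^4$ it then suffices to bound $\E[|(\hat{\pi}_{t|t}^N,\phi)-(\hat{\pi}_{t|t},\phi)|^4]$, since the contribution of the denominator term in \eqref{eq2} follows by setting $\phi=1$, and $(\hat{\pi}_{t|t},1)=(\pi_{t|t-1},g)>0$ by assumption.

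To estimate $\E[|(\hat{\pi}_{t|t}^N,\phi)-(\hat{\pi}_{t|t},\phi)|^4]$ I would reuse the decomposition $(\hat{\pi}_{t|t}^N,\phi)-(\hat{\pi}_{t|t},\phi)=\Pi_1+\Pi_2$ with $\Pi_1=(\hat{\pi}_{t|t}^N,\phi)-\E[(\hat{\pi}_{t|t}^N,\phi)\mid\mathsf{F}_{t-1}]$ and $\Pi_2=\E[(\hat{\pi}_{t|t}^N,\phi)\mid\mathsf{F}_{t-1}]-(\hat{\pi}_{t|t},\phi)$, bounding the two terms separately and combining them with Minkowski. For $\Pi_2$, Equation~\eqref{eq:field} and the definition of $\hat{\pi}_{t|t}$ identify it with $(\pi^N_{t-1|t-1},f\phi g)-(\pi_{t-1|t-1},f\phi g)$; since $g\le c_g$ by Assumption~\ref{ass1} and $\|f\phi\|\le\|\phi\|$, the function $f\phi g$ lies in $B(\mathbb{R}^n)$ with $\|f\phi g\|\le c_g\|\phi\|$, so the induction hypothesis \eqref{indu} applied to it gives $\E[|\Pi_2|^4]\le c_{t-1}\,c_g^4\,\|\phi\|^4/N^2$. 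For $\Pi_1$, observe that given $\mathsf{F}_{t-1}$ it equals $\frac{1}{N}\sum_{i=1}^N Z_i$, where $Z_i=\phi(\tilde{x}_t^{(i)})w_t^{(i)}-\E[\phi(\tilde{x}_t^{(i)})w_t^{(i)}\mid\mathsf{F}_{t-1}]$ are conditionally independent with zero conditional mean (the particles $\tilde{x}_t^{(i)}$ being drawn independently across $i$). Expanding $(\sum_i Z_i)^4$, all mixed monomials in which some factor appears to an odd power vanish under $\E[\,\cdot\mid\mathsf{F}_{t-1}]$, leaving only the $N$ diagonal terms $\E[Z_i^4\mid\mathsf{F}_{t-1}]$ and the $O(N^2)$ cross terms of the form $\E[Z_i^2\mid\mathsf{F}_{t-1}]\,\E[Z_j^2\mid\mathsf{F}_{t-1}]$ with $i\ne j$; since $\E[Z_i^4\mid\mathsf{F}_{t-1}]\le C\|\phi\|^4\,\E[(w_t^{(i)})^4\mid\mathsf{F}_{t-1}]$ and $\E[Z_i^2\mid\mathsf{F}_{t-1}]\le C\|\phi\|^2\,\E[(w_t^{(i)})^2\mid\mathsf{F}_{t-1}]$ are uniformly bounded by Remark~\ref{rmk1} (the second moment being controlled by the fourth via Jensen's inequality), this yields $\E[|\Pi_1|^4\mid\mathsf{F}_{t-1}]\le\frac{1}{N^4}\bigl(C_1 N\|\phi\|^4+C_2 N^2\|\phi\|^4\bigr)\le\tilde c_{t_1}\|\phi\|^4/N^2$, and hence the same bound after taking the outer expectation.

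Combining the bounds on $\Pi_1$ and $\Pi_2$ through Minkowski in $L^4$ gives $\E[|(\hat{\pi}_{t|t}^N,\phi)-(\hat{\pi}_{t|t},\phi)|^4]\le\hat c_t\|\phi\|^4/N^2$, and with $\phi=1$ also $\E[|(\hat{\pi}_{t|t}^N,1)-(\hat{\pi}_{t|t},1)|^4]\le\hat c_t/N^2$. Substituting these two estimates into \eqref{eq2} and applying Minkowski one last time produces $\E[|(\tilde{\pi}_{t|t}^N,\phi)-(\pi_{t|t},\phi)|^4]\le\tilde c_t\|\phi\|^4/N^2$, which is the assertion. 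The only step that genuinely differs from the $L^2$ proof of Lemma~\ref{l2} is the fourth-moment bound for $\Pi_1$: one must verify that the $O(N^3)$ and the odd-exponent $O(N^2)$ mixed terms in the expansion of $(\sum_i Z_i)^4$ disappear by conditional independence and zero mean, so that the surviving $O(N^2)$ contribution is a product of conditional second moments. This is precisely where the fourth-moment hypothesis (Assumption~\ref{measure0}, via Remark~\ref{rmk1}) enters, and it is the step most prone to bookkeeping errors.
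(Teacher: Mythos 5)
Your proposal is correct and follows essentially the same route as the paper: the same splitting via \eqref{eq2}, the same decomposition $\Pi_1+\Pi_2$, the induction hypothesis applied to $f\phi g$ for $\Pi_2$, and a fourth-moment bound of order $N^{-2}$ for $\Pi_1$. The only cosmetic difference is that you carry out the fourth-moment expansion for the conditionally independent, conditionally centered terms explicitly, whereas the paper obtains exactly that bound (the $\sum_i C_w^4 + (\sum_i C_w^2)^2$ structure) by citing Lemmas 7.1 and 7.2 of Hu, Sch\"on and Ljung together with Remark \ref{rmk1}.
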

\begin{proof}
Recall that we have defined $(\tilde{\pi}_{t|t}^N , \phi) -(\pi_{t|t}^N , \phi)$ in Equation \eqref{eq2} and consider\\
$(\hat{\pi}_{t|t}^N, \phi)-(\hat{\pi}_{t|t}, \phi) =
\left[(\hat{\pi}_{t|t}^N, \phi)-\E[(\hat{\pi}_{t|t}^N, \phi)\mid \mathsf{F}_{t-1}]\right]
+\left[\E[(\hat{\pi}_{t|t}^N, \phi)\mid \mathsf{F}_{t-1}] - (\hat{\pi}_{t|t}, \phi)\right]$.
%
%
%
Using Lemmas 7.1 and 7.2 from \cite{Hu+Schon+Ljung:2008} together with Remark \ref{rmk1}, we can easily deduce
\begin{align}
&\E\left[\Big|(\hat{\pi}_{t|t}^N, \phi)-\E[(\hat{\pi}_{t|t}^N, \phi)\mid \mathsf{F}_{t-1}]\Big|^4\mid \mathsf{F}_{t-1}\right]\nonumber\\
&\leq \frac{16 \|\phi\|^4}{N^4}\left(
\sum_{i=1}^N C_w^4 +
 \left(\sum_{i=1}^NC_w^2\right)^2\right)
= \tilde{c}_1 \, \frac{\|\phi\|^4}{N^2}.
\label{as4_1}
\end{align}
Proceeding as in \eqref{eqpart2} we get that
\begin{equation}
\E\left[\Big|\E[(\hat{\pi}_{t|t}^N, \phi)\mid \mathsf{F}_{t-1}] - (\hat{\pi}_{t|t}, \phi)\Big|^4\mid \mathsf{F}_{t-1}\right] \leq \tilde{c}_2 \, \frac{\|\phi\|^4}{N^2}.
\label{as4_2}
\end{equation}
The result follows by combining \eqref{as4_1} and \eqref{as4_2} with Minkowski's inequality.
\end{proof}

\begin{lem}\label{lemma_3}
Assume  Assumptions \ref{ass1}, \ref{measure4} and \ref{measure0} hold and we have
\begin{equation}
\E\left[\Big| (\tilde{\pi}_{t|t}^N,\phi) - (\pi_{t|t},\phi)\Big|^4\right]
\leq \tilde{c}_{t}\frac{\|\phi\|^4}{N^2}.
\end{equation}
Then
\begin{equation}
\E\left[\Big| (\pi_{t|t}^N,\phi) - (\pi_{t|t},\phi)\Big|^4\right]
\leq c_{t}\frac{\|\phi\|^4}{N^2}.
\end{equation}
\end{lem}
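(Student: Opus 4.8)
The plan is to mimic exactly the structure of Lemma~\ref{l3}, since Lemma~\ref{lemma_3} is its $L^4$ analogue, and the only new ingredient compared to the $L^2$ case is the use of the fourth-power resampling bound (Assumption~\ref{measure4}) in place of the second-power one. First I would write the trivial decomposition
\[
(\pi_{t|t}^N,\phi) - (\pi_{t|t},\phi)
= \bigl[(\pi_{t|t}^N,\phi) - (\tilde{\pi}_{t|t}^N,\phi)\bigr]
+ \bigl[(\tilde{\pi}_{t|t}^N,\phi) - (\pi_{t|t},\phi)\bigr],
\]
which splits the total error into a resampling part and a prediction--update part.

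Next I would take $L^4$ norms (i.e.\ raise to the fourth power and take expectations, then a fourth root) and apply Minkowski's inequality in $L^4$ to bound the norm of the sum by the sum of the norms. The first term is controlled by Assumption~\ref{measure4}, which gives
\[
\Bigl(\E\bigl[\bigl|(\pi_{t|t}^N,\phi) - (\tilde{\pi}_{t|t}^N,\phi)\bigr|^4\bigr]\Bigr)^{1/4}
\leq C_t^{1/4}\,\frac{\|\phi\|}{N^{1/2}},
\]
while the second term is controlled by the hypothesis of the lemma (which is the conclusion of Lemma~\ref{lemma_2}), giving a bound of $\tilde{c}_t^{1/4}\|\phi\|/N^{1/2}$. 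Adding these and raising to the fourth power yields the claimed bound with $c_t = (C_t^{1/4} + \tilde{c}_t^{1/4})^4 < \infty$.

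I do not expect any genuine obstacle here: the argument is purely a triangle-inequality splitting combined with two already-established bounds, and it is structurally identical to the proof of Lemma~\ref{l3} with the exponent $2$ replaced by $4$ and the rate $1/N$ replaced by $1/N^2$. The only point requiring a word of care is the order in which the $L^4$-Minkowski inequality is applied and the bookkeeping of the constants (and the implicit fact that $\|f\phi g\|\le c_g\|\phi\|$ was already absorbed into the constant of Lemma~\ref{lemma_2}), but none of this is delicate. Together with Lemma~\ref{lemma_2} and the $t=0$ base case, this closes the induction in the preceding theorem.
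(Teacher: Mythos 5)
Your proposal is correct and follows essentially the same route as the paper, whose proof of this lemma is exactly the splitting $(\pi_{t|t}^N,\phi)-(\pi_{t|t},\phi)=[(\pi_{t|t}^N,\phi)-(\tilde{\pi}_{t|t}^N,\phi)]+[(\tilde{\pi}_{t|t}^N,\phi)-(\pi_{t|t},\phi)]$ handled by Minkowski's inequality in $L^4$, with Assumption~\ref{measure4} controlling the resampling term and the hypothesis (i.e.\ Lemma~\ref{lemma_2}) controlling the other. Your constant bookkeeping, $c_t=(C_t^{1/4}+\tilde{c}_t^{1/4})^4$, is exactly what the paper's one-line argument implies.
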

\begin{proof}
Proceeding as the proof of Lemma \ref{l3} but using Assumption \ref{measure4} and Lemma \ref{lemma_2}, the results follows.
\end{proof}
%
%
\subsection{Empirical Measure Convergence}
In this section, we use the $L^4$-results to deduce the empirical measure convergence given in the following theorem.
\begin{thm}
Provided that Assumptions \ref{ass1}, \ref{measure4} and \ref{measure0} hold for all $t\geq 0$, then we have, almost surely,
\begin{equation}
\lim_{N\rightarrow\infty}\pi_{t|t}^N=\pi_{t|t}.
\end{equation}
\end{thm}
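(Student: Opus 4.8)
The plan is to deduce almost sure weak convergence of $\pi_{t|t}^N$ to $\pi_{t|t}$ from the $L^4$-bound
$\E[|(\pi_{t|t}^N,\phi)-(\pi_{t|t},\phi)|^4]\leq c_t\|\phi\|^4/N^2$
established in the previous subsection, using a Borel--Cantelli argument followed by a separability/density argument to pass from fixed test functions to weak convergence. First I would fix a bounded continuous $\phi$ and observe that, by the $L^4$-bound together with Markov's inequality, for every $\varepsilon>0$ we have $\sum_{N\geq 1}\Pr(|(\pi_{t|t}^N,\phi)-(\pi_{t|t},\phi)|>\varepsilon)\leq \sum_{N\geq 1} c_t\|\phi\|^4/(\varepsilon^4 N^2)<\infty$, so the Borel--Cantelli lemma yields $(\pi_{t|t}^N,\phi)\to(\pi_{t|t},\phi)$ almost surely. (This is exactly why the fourth-moment rate $N^{-2}$, as opposed to the mean-square rate $N^{-1}$, is needed: $\sum N^{-1}$ diverges whereas $\sum N^{-2}$ converges.)

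Next I would upgrade this pointwise-in-$\phi$ statement to a single almost sure event on which convergence holds simultaneously for all bounded continuous $\phi$. Since $\mathbb{R}^n$ is a Polish space, the space $C_b(\mathbb{R}^n)$ admits a countable subset $\{\phi_k\}$ that is convergence-determining for weak convergence of probability measures (for instance a countable algebra of bounded continuous functions separating points, or functions of the form appearing in the Portmanteau theorem). Applying the previous step to each $\phi_k$ and intersecting the corresponding full-probability events gives a single event $\Omega_0$ with $\Pr(\Omega_0)=1$ on which $(\pi_{t|t}^N,\phi_k)\to(\pi_{t|t},\phi_k)$ for every $k$. On $\Omega_0$, since the $\pi_{t|t}^N$ are probability measures and the $\{\phi_k\}$ are convergence-determining, it follows that $\pi_{t|t}^N\Rightarrow\pi_{t|t}$ weakly, i.e. $\lim_{N\to\infty}\pi_{t|t}^N=\pi_{t|t}$ almost surely, which is the claim.

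The main obstacle — really the only nonroutine point — is the passage from almost sure convergence of integrals against a fixed countable family to almost sure weak convergence, since the null set on which convergence may fail depends a priori on the test function. The resolution is the standard one sketched above: exploit separability of $\mathbb{R}^n$ to reduce to a countable convergence-determining class and take a countable intersection of null sets. A minor point worth stating carefully is that the $L^4$-bound in the preceding theorem is proved for $\phi\in B(\mathbb{R}^n)$, which includes $C_b(\mathbb{R}^n)$, so it applies verbatim to each $\phi_k$; and the constants $c_t$ depend on $t$ but not on $N$, which is exactly what makes the series $\sum_N c_t\|\phi_k\|^4/(\varepsilon^4 N^2)$ summable. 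Everything else is a direct invocation of Borel--Cantelli and the Portmanteau theorem.
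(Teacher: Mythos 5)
Your argument is correct and is exactly the argument the paper invokes: the paper's proof is a one-line appeal to "Markov inequality and Borel--Cantelli" following the standard route of Crisan and Doucet, which is precisely your Markov/Borel--Cantelli step for fixed $\phi$ (using summability of $c_t\|\phi\|^4/(\varepsilon^4 N^2)$) followed by the countable convergence-determining class to obtain a single full-probability event for weak convergence. You have simply spelled out the details the paper leaves to the cited reference, including the correct observation that the $N^{-2}$ rate from the $L^4$ bound is what makes the series summable.
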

\begin{proof}
Using the $L^4$-convergence results, then
the result follows by using the Markov inequality and Borel-Cantelli argument \cite{Crisan+Doucet:2000}.
\end{proof}

\section{Analytical and Numerical Example}
Assume that we have a Cox process, where the a priori dynamics of the state can be modeled as a reflected Brownian motion $x(\tau) \triangleq \eta^{1/2} \, |W(\tau)|$, where $W(\tau)$ is a standard Brownian motion, and the measurements are Poisson distributed with an intensity parameter $\lambda(\tau) = c \, x(\tau)$, where $c > 0$ is a constant, and the measurements are obtained at discrete times $t \in \{1,2,3,4,\ldots\}$. The model can now be formulated as a discrete-time model for the measurement times:
\begin{equation}
\begin{split}
  f(x_t \mid x_{t-1}) &= \frac{1}{\sqrt{2 \, \pi \, \eta}} \,
  \left[
    e^{-\frac{(x_t - x_{t-1})^2}{2\eta}}
  + e^{-\frac{(x_t + x_{t-1})^2}{2\eta}}
  \right], \\
   g(y_t \mid x_t)  &= \begin{cases}
    \lim_{x_t \to 0^+} g(y_t \mid x_t), & \text{if $x_t = 0$},\\
    \frac{(c \, x_t)^{y_t} \exp(-c \, x_t)}{y_t!}, & \text{otherwise,}
 \end{cases}
\end{split}
\nonumber
\end{equation}
where $f(x_t \mid x_{t-1})$ is a density with respect to the Lebesgue measure and $g(y_t \mid x_t)$ with respect to the counting measure. Above, we require that $x_t \ge 0$ for all $t$. The purpose of including $x_t = 0$ as the special case in $g$ is to ensure that it is continuous and bounded in the domain $x_t \ge 0$.

Let us now select a Gamma distribution with constant parameters $\alpha,\beta > 0$ as the importance distribution for a particle filter. Thus the importance sampling density (w.r.t. the Lebesgue measure) is
\begin{equation}
\begin{split}
  q(x_t) &= \frac{\beta^\alpha}{\Gamma(\alpha)}
  \, x_t^{\alpha-1} \, \exp(-\beta \, x_t).
\end{split}
\end{equation}
At some point of time we eventually reach a zero measurement $y_t = 0$. In this case we have for $x_t > 0$
\begin{equation}
  w(x_t,x_{t-1})
  = \frac{1}{\sqrt{2 \, \pi \, \eta}} \, \frac{\exp(-c \, x_t) \, \left[
    e^{-\frac{(x_t - x_{t-1})^2}{2\eta}}
  + e^{-\frac{(x_t + x_{t-1})^2}{2\eta}}
  \right]}{\frac{\beta^\alpha}{\Gamma(\alpha)}
  \, x_t^{\alpha-1} \, \exp(-\beta \, x_t)}.
\end{equation}
Let us assume that $\alpha > 1$. It is now easy to show that for any (finite) selection of $x_{t-1}$ we have
\begin{equation}
\begin{split}
  \lim_{x_t \to 0^+} w(x_t,x_{t-1}) = \infty.
\end{split}
\end{equation}
This happens, because $q(0) = 0$, but the numerator is nonzero. Thus according to the classical result for particle filters \cite{Crisan+Doucet:2000,Doucet+Freitas+Gordon:2001,Crisan+Doucet:2002} the particle filter is not guaranteed to converge in mean square, $L^4$, or empirical measure sense. 

By using $f \le 1 / \sqrt{2 \, \pi \, \eta}$ and combining terms gives
\begin{equation}
\begin{split}
  w(x_t,x_{t-1})
  &\le \frac{\Gamma(\alpha)}{\sqrt{2 \, \pi \, \eta} \, \beta^{-\alpha}}
  \, \exp\left((\beta-c) \, x_t \right)
  \, x_t^{-\alpha+1}.
\end{split}
\end{equation}
Thus we have
\begin{equation}
\begin{split}
  &\E[w^p(x_t,x_{t-1}) \mid x_{t-1}] \\
  &= 
  \int_0^\infty w^2(x_t,x_{t-1}) \, q(x_t) \, \mathrm{d}x_t \\
  &\le 
  \frac{\beta^\alpha}{\Gamma(\alpha)} \,  
  \left( \frac{\Gamma(\alpha)}{\sqrt{2 \, \pi \, \eta} \, \beta^{-\alpha}} \right)^p \\
  &\quad \times \int_0^\infty \exp\left([(p - 1) \, \beta - p \, c] \, x_t \right)
  \, x_t^{(1 - p) \, \alpha + p - 1}.
\end{split}
\end{equation}
%
%
%
Provided that $(p - 1) \, \beta - p \, c < 0$, the above expression is just a constant times the gamma function value $\Gamma((1 - p) \, \alpha + p)$. Recalling that the gamma function is finite for negative arguments other than integers, we can now deduce that even when $y_t = 0$, we have for $p = 2,4$:
\begin{equation}
\begin{split}
  \int_0^\infty (w(x_t,x_{t-1}))^p \, q(x_t) \, \mathrm{d}x_t &\leq c_w < \infty \\
\end{split}
\end{equation}
provided that $\beta < c \, p / (p-1)$, $\alpha > 1$, and $(1 - p) \, \alpha + p$ is not a negative integer. Thus, according to the present theory, the particle filter converges in mean square and $L^4$ sense for bounded Borel functions, and its empirical measure converges.
\begin{figure}[!t]
\centering
\includegraphics[width=0.5\columnwidth]{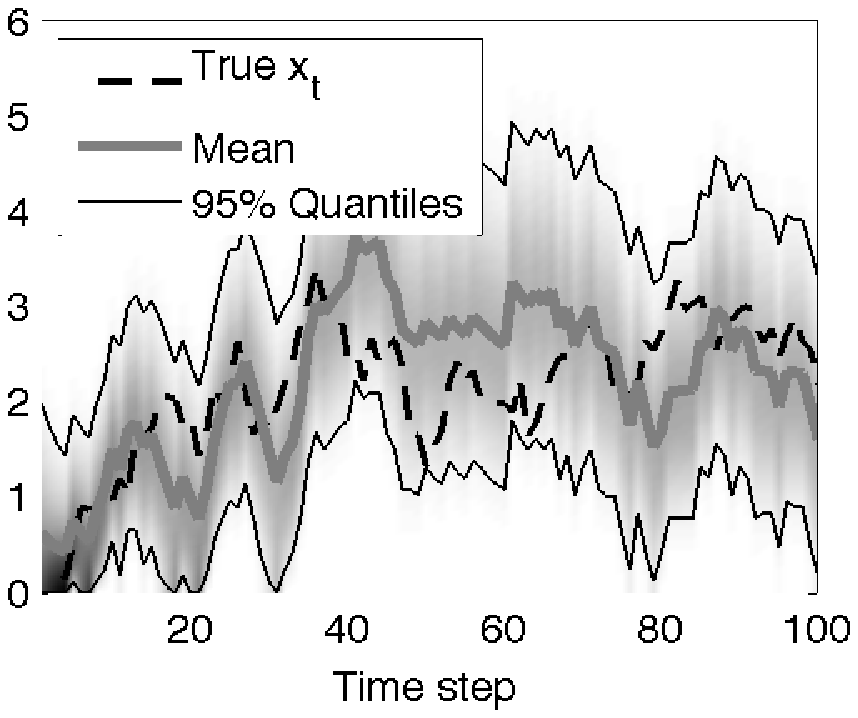}
\includegraphics[width=0.45\columnwidth]{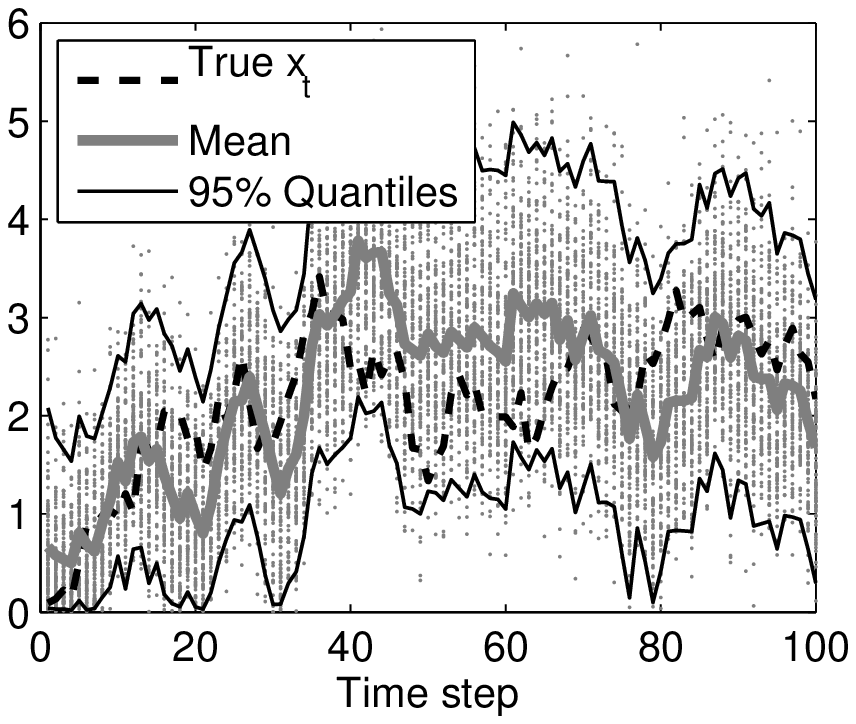}
\caption{{\em Left:} Grid based state estimate of the Cox process. {\em Right:} Particle filter estimate.}
\label{fig:cox}
\end{figure}

\begin{figure}[!t]
\centering
\includegraphics[width=0.45\columnwidth]{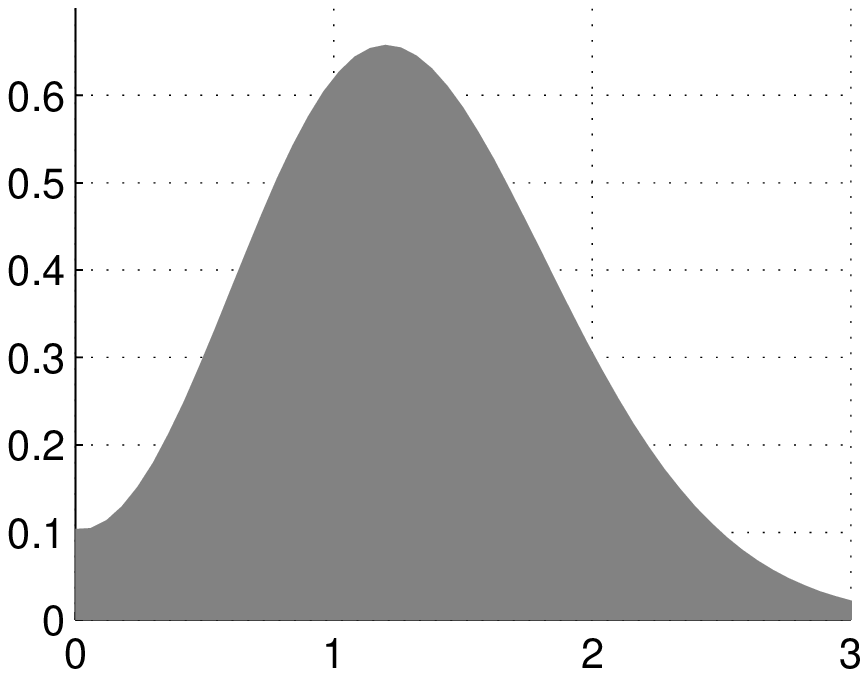}
\includegraphics[width=0.45\columnwidth]{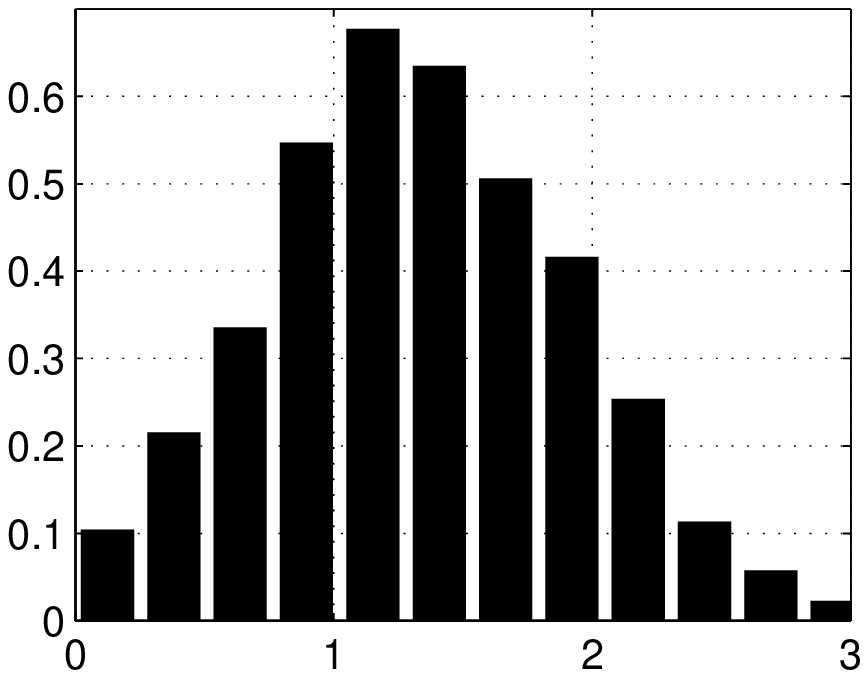}
\caption{{\em Left:} Grid based filter distribution at $t=11$. {\em Right:} Particle filter histogram for the same step.}
\label{fig:cox_hist}
\end{figure}

Because this model is single-dimensional, we can use numerical integration (naive Riemann sum in this case) to approximate the filtering solution in a dense grid. The result of applying the grid filter to a simulated process with $c=1/2$, $q = 1/10$, $x_0 = |\xi|$, where $\xi$ is unit Gaussian, is shown in Figure~\ref{fig:cox} on the left. The right hand side of Figure~\ref{fig:cox} shows the result of a particle filter with $10000$ particles and with the importance distribution parameters $\alpha = 1.5$ and $\beta = 0.5$. For visualization the number of particles is reduced to $100$ per time step. As can be seen, the result is well in line with the grid based result. Figure~\ref{fig:cox_hist} shows the filtering distribution approximations at step $t = 11$, where $y_{11} = 0$ and hence the importance weight is unbounded. The particle filter result is well in line with the grid based result despite the unboundedness of the weight.
%
%
%
\section{Conclusion and Discussion}
We have derived moment conditions for importance weights of particle filters, which ensure that the particle filter estimates of the expectations of bounded Borel functions converge in mean square and $L^4$ sense, and that the empirical measure of the particle filter converges weakly to the true filtering measure. The novel result is that the importance weights do not need to be point-wise bounded. We have also provided an example of a model and a particle filter for which the present theory guarantees the particle filter convergence although the previously developed particle filter theory does not. 

The numerical example showed an example situation when the weight moments can be bounded when the weights are not point-wise bounded. Similar phenomenon is possible whenever there are point-singularities in the weights caused by nulls in the importance distribution. An advantage of the moment conditions is that when the importance distribution is constructed indirectly (as in, e.g., \cite{Sarkka+Sottinen:2008}), the weight moment condition can be easier to check than the point-wise boundedness.

\bibliographystyle{elsarticle-num}
\bibliography{SPL_article}







\end{document}